\newtheorem{thm}{Theorem}[section]
\newtheorem{lem}[thm]{Lemma}
\newtheorem{definition}[thm]{Definition}
\journal{}
\begin{document}

\begin{frontmatter}

\title{E-cospectral hypergraphs and some hypergraphs determined by their spectra}
\author[label1]{Changjiang Bu}\ead{buchangjiang@hrbeu.edu.cn}
\author[label1,label2]{Jiang Zhou}\ead{zhoujiang04113112@163.com}
\author[label3]{Yimin Wei}\ead{ymwei@fudan.edu.cn}

\address[label1]{College of Science, Harbin Engineering University, Harbin 150001, PR China}
\address[label2]{College of Computer Science and Technology, Harbin Engineering University, Harbin 150001, PR China}
\address[label3]{School of Mathematical Sciences and Shanghai Key Laboratory of Contemporary Applied Mathematics, Fudan University, Shanghai, 200433, PR China}

\begin{abstract}
Two $k$-uniform hypergraphs are said to be cospectral (E-cospectral), if their adjacency tensors have the same characteristic polynomial (E-characteristic polynomial). A $k$-uniform hypergraph $H$ is said to be determined by its spectrum, if there is no other non-isomorphic $k$-uniform hypergraph cospectral with $H$. In this note, we give a method for constructing E-cospectral hypergraphs, which is similar with Godsil-McKay switching. Some hypergraphs are shown to be determined by their spectra.
\end{abstract}

\begin{keyword}
Cospectral hypergraph, E-cospectral hypergraphs, Adjacency tensor\\
\emph{AMS classification:} 05C65, 15A18, 15A69
\end{keyword}

\end{frontmatter}

\section{Introduction}
For a positive integer $n$, let $[n]=\{1,\ldots,n\}$. An order $k$ dimension $n$ tensor $\mathcal{A}=(a_{i_1\cdots i_k})\in\mathbb{C}^{n\times\cdots\times n}$ is a multidimensional array with $n^k$ entries, where $i_j\in[n]$, $j=1,\ldots,k$. $\mathcal{A}$ is called \textit{symmetric} if $a_{i_1i_2\cdots i_k}=a_{i_{\sigma(1)}i_{\sigma(2)}\cdots i_{\sigma(k)}}$ for any permutation $\sigma$ on $[k]$. We sometimes write $a_{i_1\cdots i_k}$ as $a_{i_1\alpha}$, where $\alpha=i_2\cdots i_k$. When $k=1$, $\mathcal{A}$ is a column vector of dimension $n$. When $k=2$, $\mathcal{A}$ is an $n\times n$ matrix. The \textit{unit tensor} of order $k\geqslant2$ and dimension $n$ is the tensor $\mathcal{I}_n=(\delta_{i_1i_2\cdots i_k})$ such that $\delta_{i_1i_2\cdots i_k}=1$ if $i_1=i_2=\cdots=i_k$, and $\delta_{i_1i_2\cdots i_k}=0$ otherwise. When $k=2$, $\mathcal{I}_n$ is the identity matrix $I_n$. Recently, Shao \cite{Shao-product} introduce the following product of tensors, which is a generalization of the matrix multiplication.
\begin{definition}\label{definition1.1}\textup{\cite{Shao-product}}
Let $\mathcal{A}$ and $\mathcal{B}$ be order $m\geqslant2$ and order $k\geqslant1$, dimension $n$ tensors, respectively. The product $\mathcal{A}\mathcal{B}$ is the following tensor $\mathcal{C}$ of order $(m-1)(k-1)+1$ and dimension $n$ with entries:
\begin{eqnarray*}
c_{i\alpha_1\ldots \alpha_{m-1}}=\sum_{i_2,\ldots,i_m\in[n]}a_{ii_2\ldots i_m}b_{i_2\alpha_1}\cdots b_{i_m\alpha_{m-1}},
\end{eqnarray*}
where $i\in[n]$, $\alpha_1,\ldots,\alpha_{m-1}\in[n]^{k-1}$.
\end{definition}

Let $\mathcal{A}$ be an order $m\geqslant2$ dimension $n$ tensor, and let $x=(x_1,\ldots,x_n)^\top$. From Definition \ref{definition1.1}, the product $\mathcal{A}x$ is a vector in $\mathbb{C}^n$ whose $i$-th component is (see Example 1.1 in \cite{Shao-product})
\begin{eqnarray*}
(\mathcal{A}x)_i=\sum_{i_2,\ldots,i_m\in[n]}a_{ii_2\cdots i_m}x_{i_2}\cdots x_{i_m}.
\end{eqnarray*}
In 2005, the concept of tensor eigenvalues was posed by Qi \cite{Qi05} and Lim \cite{Lim}. A number $\lambda\in\mathbb{C}$ is called an \textit{eigenvalue} of $\mathcal{A}$, if there exists a nonzero vector $x\in\mathbb{C}^n$ such that $\mathcal{A}x=\lambda x^{[m-1]}$, where $x^{[m-1]}=(x_1^{m-1},\ldots,x_n^{m-1})^\top$. The \textit{determinant} of $\mathcal{A}$, denoted by $\det(\mathcal{A})$, is the resultant of the system of polynomials $f_i(x_1,\ldots,x_n)=(\mathcal{A}x)_i$ ($i=1,\ldots,n$). The \textit{characteristic polynomial} of $\mathcal{A}$ is defined as $\Phi_{\mathcal{A}}(\lambda)=\det(\lambda\mathcal{I}_n-\mathcal{A})$, where $\mathcal{I}_n$ is the unit tensor of order $m$ and dimension $n$. It is known that eigenvalues of $\mathcal{A}$ are exactly roots of $\Phi_{\mathcal{A}}(\lambda)$ (see \cite{Shao-product}).

For an order $m\geqslant2$ dimension $n$ tensor $\mathcal{A}$, a number $\lambda\in\mathbb{C}$ is called an \textit{E-eigenvalue} of $\mathcal{A}$, if there exists a nonzero vector $x\in\mathbb{C}^n$ such that $\mathcal{A}x=\lambda x$ and $x^\top x=1$. In \cite{Qi07}, the \textit{E-characteristic polynomial} of $\mathcal{A}$ is defined as
\begin{eqnarray*}
\phi_\mathcal{A}(\lambda)=\begin{cases}{\rm Res}_x\left(\mathcal{A}x-\lambda(x^\top x)^{\frac{m-2}{2}}x\right)~~~~~~~m~\mbox{is even},\\{\rm Res}_{x,\beta}\begin{pmatrix}\mathcal{A}x-\lambda\beta^{m-2}x\\x^\top x-\beta^2\end{pmatrix}~~~~~~~~~~m~\mbox{is odd},\end{cases}
\end{eqnarray*}
where `Res' is the resultant of the system of polynomials. It is known that E-eigenvalues of $\mathcal{A}$ are roots of $\phi_\mathcal{A}(\lambda)$ (see \cite{Qi07}). If $m=2$, then $\phi_\mathcal{A}(\lambda)=\Phi_{\mathcal{A}}(\lambda)$ is just the characteristic polynomial of the square matrix $\mathcal{A}$.

A hypergraph $H$ is called $k$-\textit{uniform} if each edge of $H$ contains exactly $k$ distinct vertices. All hypergraphs in this note are uniform and simple. Let $K_n^k$ denote the complete $k$-uniform hypergraph with $n$ vertices, i.e., every $k$ distinct vertices of $K_n^k$ forms an edge. For a $k$-uniform hypergraph $H=(V(H),E(H))$, a hypergraph $G=(V(G),E(G))$ is a \textit{sub-hypergraph} of $H$, if $V(G)\subseteq V(H)$ and $E(G)\subseteq E(H)$. For any edge $u_1\cdots u_k\in E(H)$, we say that $u_k$ is a \textit{neighbor} of $\{u_1,\ldots,u_{k-1}\}$. The \textit{complement} of $H$ is a $k$-uniform hypergraph with vertex set $V(H)$ and edge set $E(K_{|V(H)|}^k)\backslash E(H)$. The \textit{adjacency tensor} of $H$, denoted by $\mathcal{A}_H$, is an order $k$ dimension $|V(H)|$ tensor with entries (see \cite{Cooper12})
\begin{eqnarray*}
a_{i_1i_2\cdots i_k}=\begin{cases}\frac{1}{(k-1)!}~~~~~~~\mbox{if}~i_1i_2\cdots i_k\in E(H),\\
0~~~~~~~~~~~~~\mbox{otherwise}.\end{cases}
\end{eqnarray*}
Clearly $\mathcal{A}_H$ is a symmetric tensor. We say that two $k$-uniform hypergraphs are \textit{cospectral} (\textit{E-cospectral}), if their adjacency tensors have the same characteristic polynomial (E-characteristic polynomial). A $k$-uniform hypergraph $H$ is said to be \textit{determined by its spectrum}, if there is no other non-isomorphic $k$-uniform hypergraph cospectral with $H$. We shall use ``DS" as an abbreviation for ``determined by its spectrum" in this note. Cospectral (E-cospectral) hypergraphs and DS hypergraphs are generalizations of cospectral graphs and DS graphs in the classic sense \cite{Dam03}.

Recently, the research on spectral theory of hypergraphs has attracted extensive attention [2,6-9,12,15-20]. In this note, we give a method for constructing E-cospectral hypergraphs. Some hypergraphs are shown to be DS.
\section{Preliminaries}
The following lemma can be obtained from equation (2.1) in \cite{Shao-product}.
\begin{lem}\label{lem1}
Let $\mathcal{A}=(a_{i_1\cdots i_m})$ be an order $m\geqslant2$ dimension $n$ tensor, and let $P=(p_{ij})$ be an $n\times n$ matrix. Then
\begin{eqnarray*}
(P\mathcal{A}P^\top)_{i_1\cdots i_m}=\sum_{j_1,\ldots,j_m\in[n]}a_{j_1\cdots j_m}p_{i_1j_1}p_{i_2j_2}\cdots p_{i_mj_m}.
\end{eqnarray*}
\end{lem}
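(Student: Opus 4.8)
The plan is to unfold the tensor product of Definition~\ref{definition1.1} twice, once for the left multiplication by $P$ and once for the right multiplication by $P^\top$, and then match indices with the claimed formula. Since $P$ and $P^\top$ both have order $2$, each application of Definition~\ref{definition1.1} leaves the order of the tensor it acts on unchanged: $P\mathcal{A}$ is again order $m$ (because $(2-1)(m-1)+1=m$), and $(P\mathcal{A})P^\top$ is again order $m$ (because $(m-1)(2-1)+1=m$). Both intermediate products are defined, since the left factor has order $\geqslant2$ in each case, so the expression $P\mathcal{A}P^\top$ is well formed.

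First I would compute $P\mathcal{A}$. Taking $P$ as the order $2$ left factor and $\mathcal{A}$ as the order $m$ right factor, Definition~\ref{definition1.1} reduces to a single sum over one index and gives
\begin{eqnarray*}
(P\mathcal{A})_{i\alpha}=\sum_{t\in[n]}p_{it}\,a_{t\alpha},\qquad \alpha\in[n]^{m-1},
\end{eqnarray*}
so that $P$ acts only on the first slot of $\mathcal{A}$. Next I would set $\mathcal{C}=P\mathcal{A}$ and form $\mathcal{C}P^\top$, now taking $\mathcal{C}$ as the order $m$ left factor and $P^\top$ as the order $2$ right factor. Because $P^\top$ has order $2$, each auxiliary index in Definition~\ref{definition1.1} is a single element of $[n]$, and there are $m-1$ of them. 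Writing the free indices as $i_1,\ldots,i_m$ and the summation indices as $j_1,\ldots,j_m$, substituting $(\mathcal{C})_{i_1 j_2\cdots j_m}=\sum_{j_1}p_{i_1 j_1}a_{j_1 j_2\cdots j_m}$ and using $(P^\top)_{j_r i_r}=p_{i_r j_r}$ on the remaining slots, I collect the $m$ factors of $P$ to obtain
\begin{eqnarray*}
(P\mathcal{A}P^\top)_{i_1\cdots i_m}=\sum_{j_1,\ldots,j_m\in[n]}a_{j_1\cdots j_m}\,p_{i_1j_1}p_{i_2j_2}\cdots p_{i_mj_m},
\end{eqnarray*}
which is the asserted identity.

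The computation is routine; the only point requiring care is the index bookkeeping, namely verifying that the two-fold application deposits exactly one factor $p_{i_r j_r}$ on each of the $m$ slots of $\mathcal{A}$. In particular, the transpose in $P^\top$ is what ensures that the last $m-1$ slots carry $p_{i_r j_r}$, matching the orientation $p_{i_1 j_1}$ on the first slot, rather than $p_{j_r i_r}$. I do not need associativity of the product here, since I evaluate the single grouping $(P\mathcal{A})P^\top$; associativity, established in~\cite{Shao-product}, would only be invoked to justify omitting the parentheses in $P\mathcal{A}P^\top$.
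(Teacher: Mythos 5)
Your proof is correct, but it is not the paper's route: the paper gives no argument at all for this lemma, stating only that it ``can be obtained from equation (2.1) in \cite{Shao-product}'', i.e., it imports Shao's already-established formula for products of the form $P\mathcal{A}Q$ and specializes $Q=P^\top$. You instead derive the identity from first principles, unfolding Definition~\ref{definition1.1} twice. Both instantiations are handled correctly: for $P\mathcal{A}$ the left factor has order $2$, so the definition collapses to a single summation index and gives $(P\mathcal{A})_{i\alpha}=\sum_{t}p_{it}a_{t\alpha}$; for $(P\mathcal{A})P^\top$ the right factor has order $2$, so each of the $m-1$ auxiliary strings $\alpha_r\in[n]^{k-1}$ degenerates to a single index, contributing $(P^\top)_{j_ri_r}=p_{i_rj_r}$, and interchanging the finite sums yields $\sum_{j_1,\ldots,j_m}a_{j_1\cdots j_m}p_{i_1j_1}\cdots p_{i_mj_m}$ as claimed. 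Your closing remark is also the right one to make: the expression $P\mathcal{A}P^\top$ is a priori ambiguous, and you evaluate only the grouping $(P\mathcal{A})P^\top$, invoking associativity (proved in \cite{Shao-product}) solely to justify dropping the parentheses. What your approach buys is self-containedness, since the lemma then rests only on the paper's own Definition~\ref{definition1.1}; what the paper's citation buys is brevity, deferring to the source where this calculus of tensor products is developed and where the general two-sided formula is stated once and for all.
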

We can obtain the following lemma from Lemma \ref{lem1}.
\begin{lem}\label{lem2}
Let $\mathcal{B}=P\mathcal{A}P^\top$, where $\mathcal{A}$ is a tensor of dimension $n$, $P$ is an $n\times n$ matrix. If $\mathcal{A}$ is symmetric, then $\mathcal{B}$ is symmetric.
\end{lem}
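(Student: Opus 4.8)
The plan is to prove symmetry directly from the explicit entrywise formula for $\mathcal{B}=P\mathcal{A}P^\top$ supplied by Lemma \ref{lem1}, reducing the claim to a relabeling of the dummy summation indices. Recall that a tensor is symmetric precisely when its entries are invariant under every permutation $\sigma$ of the index positions $[m]$; so I would fix an arbitrary permutation $\sigma$ on $[m]$ and an arbitrary tuple $(i_1,\ldots,i_m)$, and aim to show that $b_{i_{\sigma(1)}\cdots i_{\sigma(m)}}=b_{i_1\cdots i_m}$.

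First I would write, using Lemma \ref{lem1},
\[
b_{i_{\sigma(1)}\cdots i_{\sigma(m)}}=\sum_{j_1,\ldots,j_m\in[n]}a_{j_1\cdots j_m}\,p_{i_{\sigma(1)}j_1}p_{i_{\sigma(2)}j_2}\cdots p_{i_{\sigma(m)}j_m}.
\]
The key step is then a change of the (dummy) summation variables: set $j'_s=j_{\sigma^{-1}(s)}$, equivalently $j_t=j'_{\sigma(t)}$. Since $\sigma$ is a bijection of $[m]$, the induced map $(j_1,\ldots,j_m)\mapsto(j'_1,\ldots,j'_m)$ is a bijection of $[n]^m$ onto itself, so the value of the sum is unchanged. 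Under this substitution the product of matrix entries becomes $\prod_{t=1}^m p_{i_{\sigma(t)}j'_{\sigma(t)}}$, which, reindexing by $s=\sigma(t)$, equals $\prod_{s=1}^m p_{i_s j'_s}$ — exactly the product appearing in the formula for $b_{i_1\cdots i_m}$.

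Finally, the same substitution turns the coefficient into $a_{j'_{\sigma(1)}\cdots j'_{\sigma(m)}}$, and here I would invoke the hypothesis that $\mathcal{A}$ is symmetric to replace this by $a_{j'_1\cdots j'_m}$. Combining the two observations gives
\[
b_{i_{\sigma(1)}\cdots i_{\sigma(m)}}=\sum_{j'_1,\ldots,j'_m\in[n]}a_{j'_1\cdots j'_m}\,p_{i_1 j'_1}\cdots p_{i_m j'_m}=b_{i_1\cdots i_m},
\]
which is the desired identity. As $\sigma$ and $(i_1,\ldots,i_m)$ were arbitrary, $\mathcal{B}$ is symmetric. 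The argument is essentially bookkeeping, so the only real obstacle is keeping the two relabelings straight: one must check that the single substitution $j_t=j'_{\sigma(t)}$ simultaneously restores the \emph{unpermuted} ordering of the $P$-factors and leaves the $\mathcal{A}$-entry permuted by $\sigma$ (so that symmetry of $\mathcal{A}$ is exactly what is needed to finish), rather than accidentally permuting both factors or neither.
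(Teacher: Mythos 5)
Your proof is correct and is exactly the argument the paper intends: the paper gives no details, simply asserting that Lemma \ref{lem2} follows from the entrywise formula of Lemma \ref{lem1}, and your change of dummy indices $j_t=j'_{\sigma(t)}$ (which restores the order of the $P$-factors while transferring the permutation $\sigma$ onto the entry of $\mathcal{A}$, where symmetry absorbs it) is the bookkeeping that justifies that assertion.
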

Let $\mathcal{B}=P\mathcal{A}P^\top$, where $\mathcal{A}$ is a tensor of dimension $n$, $P$ is an $n\times n$ real orthogonal matrix. In \cite{Shao-product}, Shao pointed out that $\mathcal{A},\mathcal{B}$ are orthogonally similar tensors defined by Qi \cite{Qi05}. Orthogonally similar tensors have the following property.
\begin{lem}\label{lem3}\textup{\cite{Li}}
Let $\mathcal{B}=P\mathcal{A}P^\top$, where $\mathcal{A}$ is a tensor of dimension $n$, $P$ is an $n\times n$ real orthogonal matrix. Then $\mathcal{A}$ and $\mathcal{B}$ have the same E-characteristic polynomial.
\end{lem}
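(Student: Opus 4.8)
The plan is to reduce the asserted equality of E-characteristic polynomials to the transformation behaviour of the resultant under an orthogonal change of coordinates. The first step I would carry out is to establish the pointwise identity $\mathcal{B}x=P\mathcal{A}(P^\top x)$ for every $x\in\mathbb{C}^n$. This follows directly from Lemma \ref{lem1}: inserting $b_{ii_2\cdots i_m}=\sum_{j_1,\ldots,j_m}a_{j_1\cdots j_m}p_{ij_1}p_{i_2j_2}\cdots p_{i_mj_m}$ into $(\mathcal{B}x)_i=\sum_{i_2,\ldots,i_m}b_{ii_2\cdots i_m}x_{i_2}\cdots x_{i_m}$, the inner sums factor through $\sum_{i_s}p_{i_sj_s}x_{i_s}=(P^\top x)_{j_s}$, collapsing the expression to $\sum_{j_1}p_{ij_1}\bigl(\mathcal{A}(P^\top x)\bigr)_{j_1}=(P\mathcal{A}(P^\top x))_i$. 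Note that this step needs only the product rule, not orthogonality.

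Next I would put $y=P^\top x$, so that $x=Py$ and, crucially using orthogonality, $x^\top x=y^\top y$, and rewrite the polynomial system whose resultant defines $\phi_{\mathcal{B}}$. For even $m$, Step 1 gives
\[
\mathcal{B}x-\lambda(x^\top x)^{\frac{m-2}{2}}x=P\left(\mathcal{A}y-\lambda(y^\top y)^{\frac{m-2}{2}}y\right),
\]
so the $\mathcal{B}$-system is obtained from the $\mathcal{A}$-system by the substitution $x=Py$ followed by left multiplication of the forms by $P$. For odd $m$ the same computation yields, in the first $n$ coordinates, $\mathcal{B}x-\lambda\beta^{m-2}x=P(\mathcal{A}y-\lambda\beta^{m-2}y)$, while the auxiliary equation is preserved since $x^\top x-\beta^2=y^\top y-\beta^2$; hence the two systems now differ by the substitution $x=Py$ (with $\beta$ fixed) and left multiplication by the block matrix $\mathrm{diag}(P,1)$.

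The third step invokes the two standard transformation laws for the resultant of homogeneous forms (each form above is homogeneous, of degree $m-1$, or degree $2$ for the normalisation equation in the odd case): under an invertible substitution of the variables with matrix $Q$ the resultant acquires a factor $(\det Q)^{d_1\cdots d_N}$, and under an invertible degree-preserving linear combination of the forms with matrix $C$ it acquires a factor $(\det C)^{D}$, where $D$ is the common degree of the resultant in the coefficients of the combined forms. Since $P$ is orthogonal, both factors are powers of $\det P=\pm1$. Collecting exponents — $(m-1)^n$ from the substitution and $(m-1)^{n-1}$ from the combination in the even case, and $2(m-1)^n$ and $2(m-1)^{n-1}$ in the odd case — I would check that the total exponent is even for all $n\geq2$, so the overall scalar is $(\det P)^{\mathrm{even}}=1$ and $\phi_{\mathcal{B}}(\lambda)=\phi_{\mathcal{A}}(\lambda)$ as polynomials in $\lambda$.

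The routine parts are Step 1 and the bookkeeping in Step 2; the hard part will be Step 3. One must apply the resultant transformation laws correctly in the odd-$m$ case, where the forms do not all share a degree (the normalisation $x^\top x-\beta^2$ has degree $2$ while the eigen-equations have degree $m-1$), so the combination matrix must respect the degree grading. The genuinely delicate point is verifying that the accumulated power of $\det P$ is exactly $+1$ rather than merely $\pm1$: the parity check on the exponent $(m-1)^{n-1}m$ in the even case, and on the even exponents $2(m-1)^n$ and $2(m-1)^{n-1}$ in the odd case, is precisely what upgrades the easy observation that $\phi_{\mathcal{A}}$ and $\phi_{\mathcal{B}}$ share their roots (via the E-eigenpair bijection $x\leftrightarrow P^\top x$) to the stronger conclusion that the polynomials are identical.
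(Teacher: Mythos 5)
The paper offers no proof of this lemma: it is quoted directly from \cite{Li}, so there is no internal argument to measure yours against, and a self-contained proof is genuinely useful content. Your argument is correct. Step 1 is exactly the computation that Lemma \ref{lem1} licenses (and you are right that it needs no orthogonality); Step 2 uses orthogonality in precisely the two places it must (collapsing $\mathcal{A}(P^\top P y)$ to $\mathcal{A}y$ and preserving the quadratic form $x^\top x$); and your exponent bookkeeping in Step 3 checks out. In the even case the accumulated factor is $(\det P)^{(m-1)^n+(m-1)^{n-1}}=(\det P)^{m(m-1)^{n-1}}$, which equals $1$ precisely because $m$ is even there, and in the odd case both exponents $2(m-1)^n$ and $2(m-1)^{n-1}$ are even outright, so in all cases the scalar is $+1$ and the polynomials agree identically, not merely up to sign. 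The one point that requires more than quoting a textbook statement is the one you flag yourself: in the odd case the forms have mixed degrees $(m-1,\ldots,m-1,2)$, so the ``linear combination'' law must be applied to the degree-preserving block matrix $\mathrm{diag}(P,1)$; the standard argument (for invertible $C$ respecting degrees, $\mathrm{Res}(Cf)$ vanishes iff $\mathrm{Res}(f)$ does, irreducibility of the resultant forces $\mathrm{Res}(Cf)=\chi(C)\,\mathrm{Res}(f)$ with $\chi$ a polynomial character, hence a power of $\det$, and evaluation on scalar matrices fixes the power) yields exactly your exponent $2(m-1)^{n-1}$. With that justification spelled out, your proposal is a complete proof, following the natural resultant-invariance route that the cited reference \cite{Li} takes; it does not diverge from anything in the present paper, which simply imports the result.
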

A \textit{simplex} in a $k$-uniform hypergraph is a set of $k+1$ vertices where every set of $k$ vertices forms an edge (see [2, Definition 3.4]).
\begin{lem}\label{lem4}
Let $G$ and $H$ be cospectral $k$-uniform hypergraphs. Then $G$ and $H$ have the same number of vertices, edges and simplices.
\end{lem}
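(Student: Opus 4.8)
The plan is to read off all three invariants from the coefficients (equivalently, the power sums of the roots) of the common characteristic polynomial. Write $\mathcal{A}_G$ and $\mathcal{A}_H$ for the two adjacency tensors, and recall that for an order-$k$ dimension-$n$ tensor the characteristic polynomial $\Phi_{\mathcal{A}}(\lambda)=\det(\lambda\mathcal{I}_n-\mathcal{A})$ has degree $n(k-1)^{n-1}$. Since $G$ and $H$ are cospectral we have $\Phi_{\mathcal{A}_G}=\Phi_{\mathcal{A}_H}$, so these polynomials share the same degree; as $n\mapsto n(k-1)^{n-1}$ is strictly increasing for $k\geqslant2$, the two hypergraphs must have the same number of vertices $n$. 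This settles the vertex count and fixes a common $n$ and $k$ for the remainder of the argument.

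For the edge count I would pass to traces. Writing $\mathrm{tr}_d(\mathcal{A})=\sum_i\lambda_i^{d}$ for the $d$-th order trace (the $d$-th power sum of the eigenvalues, which by the Morozov--Shakirov and Hu--Huang--Ling--Qi trace formulas is a polynomial in the entries of $\mathcal{A}$), equality of the characteristic polynomials forces $\mathrm{tr}_d(\mathcal{A}_G)=\mathrm{tr}_d(\mathcal{A}_H)$ for every $d$, since the two tensors then have the same multiset of eigenvalues. The key input is the combinatorial evaluation of the first non-vanishing traces of an adjacency tensor: I expect $\mathrm{tr}_1=\cdots=\mathrm{tr}_{k-1}=0$ and $\mathrm{tr}_k(\mathcal{A}_H)=c_{n,k}\,|E(H)|$ for a constant $c_{n,k}\neq0$ depending only on $n$ and $k$. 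This is the $k$-uniform analogue of $\mathrm{tr}_2(A)=2|E|$ for graphs, and holds because the only closed sub-hypergraph structure that can fill the $k$ available slots using distinct-vertex edges is a single edge. Since $n,k$ agree and $\mathrm{tr}_k$ agrees, $|E(G)|=|E(H)|$.

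For simplices I would go one level further. A simplex is a set of $k+1$ vertices all of whose $k$-subsets are edges, so it carries exactly $k+1$ edges on $k+1$ vertices and is precisely the new connected structure appearing at the $(k+1)$-st trace. I anticipate an identity of the form $\mathrm{tr}_{k+1}(\mathcal{A}_H)=d_{n,k}\,s(H)+e_{n,k}\,|E(H)|$, where $s(H)$ is the number of simplices, $d_{n,k}\neq0$, and $e_{n,k}$ absorbs the degenerate (single-edge) contributions, all three constants depending only on $n$ and $k$; this mirrors $\mathrm{tr}_3(A)=6\,(\#\,\text{triangles})$ in the graph case, a triangle being exactly a simplex when $k=2$. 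As $\mathrm{tr}_{k+1}$, the values of $n,k$, and (by the previous step) $|E|$ all coincide for $G$ and $H$, the simplex counts must coincide as well.

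The main obstacle is the step I have compressed into the two trace evaluations: pinning down the exact combinatorial meaning of $\mathrm{tr}_k$ and $\mathrm{tr}_{k+1}$ for the adjacency tensor. One must show rigorously, via the tensor trace formula together with the sub-hypergraph bookkeeping of Cooper and Dutle \cite{Cooper12}, that no structure other than a single edge contributes to $\mathrm{tr}_k$, that the only genuinely new contribution to $\mathrm{tr}_{k+1}$ comes from simplices, and that the leading constants $c_{n,k}$ and $d_{n,k}$ are nonzero. Once these coefficient identifications are established, the lemma follows at once from the equality of the characteristic polynomials.
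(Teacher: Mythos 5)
Your proposal is correct and takes essentially the same approach as the paper: the vertex count is read off from the degree $n(k-1)^{n-1}$ of the common characteristic polynomial, and the edge and simplex counts from its lowest-order nontrivial spectral data. The two trace evaluations you flag as the main obstacle are precisely the content of Theorems 3.15 and 3.17 of Cooper and Dutle \cite{Cooper12}, which the paper simply cites (stated there for the codegree-$k$ and codegree-$(k+1)$ coefficients rather than for power sums, the two being equivalent via Newton's identities); they hold as you anticipate, except that the $e_{n,k}\,|E(H)|$ term in $\mathrm{tr}_{k+1}$ is in fact zero, since an edge repeated $k+1$ times gives vertex degrees not divisible by $k$ and so does not contribute --- a harmless discrepancy, as your argument only needs that term's coefficient to depend on $n$ and $k$ alone.
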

\begin{proof}
The degree of the characteristic polynomial of an order $k$ dimension $n$ tensors is $n(k-1)^{n-1}$ (see \cite{Qi05}). Since $\mathcal{A}_G$ and $\mathcal{A}_H$ are order $k$ tensors, $G$ and $H$ have the same number of vertices. From [2, Theorem 3.15] and [2, Theorem 3.17], we know that $G$ and $H$ have the same number of edges and simplices.
\end{proof}
\section{Main results}
Let $H=(V(H),E(H))$ be a $k$-uniform hypergraph with a partition $V(H)=V_1\cup V_2$, and $H$ satisfies the following conditions:

(a) For each edge $e\in E(H)$, $e$ contains at most one vertex in $V_1$.

(b) For any $k-1$ distinct vertices $u_1,\ldots,u_{k-1}\in V_2$, $\{u_1,\ldots,u_{k-1}\}$ has either $0,\frac{1}{2}|V_1|$ or $|V_1|$ neighbors in $V_1$.

Similar with GM switching \cite{GM switching,Haemers}, we construct a hypergraph E-cospectral with $H$ as follows.
\begin{thm}\label{thm1}
Let $H$ be a $k$-uniform hypergraph satisfies the conditions (a) and (b) described above. For any $\{u_1,\ldots,u_{k-1}\}\subseteq V_2$ which has $\frac{1}{2}|V_1|$ neighbors in $V_1$, by replacing these $\frac{1}{2}|V_1|$ neighbors with the other $\frac{1}{2}|V_1|$ vertices in $V_1$, we obtain a $k$-uniform hypergraph $G$ which is E-cospectral with $H$.
\end{thm}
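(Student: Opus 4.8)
The plan is to exhibit a real orthogonal matrix $P$ for which $\mathcal{A}_G = P\mathcal{A}_H P^\top$; once this identity is in hand, Lemma \ref{lem3} immediately gives that $\mathcal{A}_H$ and $\mathcal{A}_G$ share the same E-characteristic polynomial, so that $H$ and $G$ are E-cospectral. Writing $n_1=|V_1|$ (which condition (b) forces to be even), I would take $P$ in block form relative to $V(H)=V_1\cup V_2$, namely $P=\begin{pmatrix}Q&0\\0&I\end{pmatrix}$ with $Q=\frac{2}{n_1}J-I$, where $J$ is the all-ones matrix of order $n_1$. A direct computation gives $Q^\top=Q$ and $Q^2=\frac{4}{n_1^2}J^2-\frac{4}{n_1}J+I=I$ (using $J^2=n_1J$), so $Q$, and hence $P$, is symmetric and orthogonal. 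This is the tensor analogue of the Godsil--McKay switching matrix.

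Next I would expand $(P\mathcal{A}_H P^\top)_{i_1\cdots i_k}$ using Lemma \ref{lem1}. Because the off-diagonal blocks of $P$ vanish, every index $i_\ell\in V_2$ forces the summation index $j_\ell=i_\ell$ (since $p_{i_\ell j_\ell}=\delta_{i_\ell j_\ell}$), while every index $i_\ell\in V_1$ lets $j_\ell$ range over $V_1$ with weight $Q_{i_\ell j_\ell}$. By Lemma \ref{lem2} the tensor $P\mathcal{A}_H P^\top$ is symmetric, so it suffices to verify the entries according to how many of $i_1,\ldots,i_k$ lie in $V_1$.

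Then I would carry out the case analysis. If two or more indices lie in $V_1$, then at least two of the $j_\ell$ lie in $V_1$, so $a_{j_1\cdots j_k}=0$ by condition (a) and the entry vanishes; the same holds for $\mathcal{A}_G$ since $G$ again satisfies (a). If no index lies in $V_1$, the entry equals $a_{i_1\cdots i_k}$, and such all-$V_2$ edges are untouched by the switching. The substantive case is exactly one index in $V_1$; by symmetry take $i_1\in V_1$ and $i_2,\ldots,i_k\in V_2$ distinct, and let $N\subseteq V_1$ be the set of neighbors of $\{i_2,\ldots,i_k\}$. Then
\begin{eqnarray*}
(P\mathcal{A}_H P^\top)_{i_1 i_2\cdots i_k}=\frac{1}{(k-1)!}\sum_{j_1\in N}Q_{i_1 j_1}=\frac{1}{(k-1)!}\left(\frac{2|N|}{n_1}-[i_1\in N]\right),
\end{eqnarray*}
where $[i_1\in N]$ is the indicator of $i_1\in N$. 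Condition (b) forces $|N|\in\{0,\tfrac{n_1}{2},n_1\}$: when $|N|=0$ or $|N|=n_1$ the entry reproduces the unchanged adjacency value ($0$ and $\tfrac{1}{(k-1)!}$ respectively), and when $|N|=\tfrac{n_1}{2}$ the entry becomes $\tfrac{1}{(k-1)!}(1-[i_1\in N])$, which is nonzero precisely when $i_1\in V_1\setminus N$ — exactly the swapped neighbor set defining $G$. Hence $P\mathcal{A}_H P^\top=\mathcal{A}_G$.

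I expect the main obstacle to be the bookkeeping of this last case: one must check that the single matrix $P$, acting only on $V_1$, simultaneously realizes the prescribed switch for every qualifying $(k-1)$-subset of $V_2$ (the computation localizes to each subset, which is what makes one matrix suffice), and that repeated or non-distinct indices, which cannot form edges, are handled consistently on both sides. Confirming that $G$ still obeys condition (a) — so that the two-or-more-in-$V_1$ entries genuinely vanish for $\mathcal{A}_G$ as well — is the remaining point to be verified.
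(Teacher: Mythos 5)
Your proposal is correct and takes essentially the same approach as the paper: the identical block switching matrix $P$ with $Q=\frac{2}{n_1}J-I_{n_1}$, the same appeal to Lemmas \ref{lem1}, \ref{lem2} and \ref{lem3}, and the same case analysis on how many indices lie in $V_1$. Your unified indicator-function formula $\frac{1}{(k-1)!}\bigl(\frac{2|N|}{n_1}-[i_1\in N]\bigr)$ is just a compact restatement of the paper's case-by-case equations (\ref{Eq.5})--(\ref{Eq.8}).
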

\begin{proof}
Let $P=\begin{pmatrix}\frac{2}{n_1}J-I_{n_1}&0\\0&I_{n_2}\end{pmatrix}$, where $n_1=|V_1|,n_2=|V_2|$, $J$ is the $n_1\times n_1$ all-ones matrix, $\frac{2}{n_1}J-I_{n_1}$ and $I_{n_2}$ correspond to the vertex sets $V_1$ and $V_2$, respectively. Then $P=P^\top=P^{-1}$. Suppose that $\mathcal{A}_H=(a_{i_1i_2\cdots i_k})$, and let $\mathcal{B}=P\mathcal{A}_HP^\top$. By Lemma \ref{lem2}, $\mathcal{B}$ is symmetric. We need to show that $\mathcal{B}=\mathcal{A}_G$. By Lemma \ref{lem1}, we have
\begin{eqnarray}\label{Eq.1}
(\mathcal{B})_{i_1\cdots i_k}=\sum_{j_1,\ldots,j_k\in V(H)}a_{j_1\cdots j_k}p_{i_1j_1}p_{i_2j_2}\cdots p_{i_kj_k}.
\end{eqnarray}
Note that $P=\begin{pmatrix}\frac{2}{n_1}J-I_{n_1}&0\\0&I_{n_2}\end{pmatrix}$. From Eq. (\ref{Eq.1}), we have
\begin{eqnarray}\label{Eq.2}
(\mathcal{B})_{i_1\cdots i_k}=a_{i_1\cdots i_k}~~\mbox{if}~i_1,\ldots,i_k\in V_2.
\end{eqnarray}
Since $H$ satisfies the condition (a), we have $a_{j_1\cdots j_k}=0$ if $|\{j_1,\ldots,j_k\}\cap V_1|\geqslant2$. From Eq. (\ref{Eq.1}), we have
\begin{eqnarray}\label{Eq.3}
(\mathcal{B})_{i_1\cdots i_k}=a_{i_1\cdots i_k}=0~~\mbox{if}~|\{i_1,\ldots,i_k\}\cap V_1|\geqslant2.
\end{eqnarray}
Next we consider the case $|\{i_1,\ldots,i_k\}\cap V_1|=1$. Note that $\mathcal{B}$ is symmetric. Without loss of generality, suppose that $i_1\in V_1,i_2,\ldots,i_k\in V_2$. From Eq. (\ref{Eq.1}), we have
\begin{eqnarray}\label{Eq.4}
(\mathcal{B})_{i_1\cdots i_k}=\sum_{j_1\in V_1}a_{j_1i_2\cdots i_k}p_{i_1j_1}~(i_1\in V_1,i_2,\ldots,i_k\in V_2).
\end{eqnarray}
Since $H$ satisfies the condition (b), $S^{i_2\cdots i_k}=\{a_{j_1i_2\cdots i_k}|j_1\in V_1,a_{j_1i_2\cdots i_k}\neq0\}$ contains either $0,\frac{1}{2}|V_1|$ or $|V_1|$ elements for any given $i_2,\ldots,i_k\in V_2$. By computing the sum in (\ref{Eq.4}), we have
\begin{eqnarray}\label{Eq.5}
(\mathcal{B})_{i_1\cdots i_k}=a_{i_1\cdots i_k}=0~~\mbox{if}~i_1\in V_1,i_2,\ldots,i_k\in V_2,|S^{i_2\cdots i_k}|=0.
\end{eqnarray}
\begin{eqnarray}\label{Eq.6}
(\mathcal{B})_{i_1\cdots i_k}=a_{i_1\cdots i_k}=\frac{1}{(k-1)!}~~\mbox{if}~i_1\in V_1,i_2,\ldots,i_k\in V_2,|S^{i_2\cdots i_k}|=|V_1|.
\end{eqnarray}
\begin{eqnarray}\label{Eq.7}
(\mathcal{B})_{i_1\cdots i_k}=0~~\mbox{if}~i_1\in V_1,i_2,\ldots,i_k\in V_2,a_{i_1\cdots i_k}=\frac{1}{(k-1)!},|S^{i_2\cdots i_k}|=\frac{1}{2}|V_1|.
\end{eqnarray}
\begin{eqnarray}\label{Eq.8}
(\mathcal{B})_{i_1\cdots i_k}=\frac{1}{(k-1)!}~~\mbox{if}~i_1\in V_1,i_2,\ldots,i_k\in V_2,a_{i_1\cdots i_k}=0,|S^{i_2\cdots i_k}|=\frac{1}{2}|V_1|.
\end{eqnarray}
From Eqs. (\ref{Eq.2})(\ref{Eq.3}) and (\ref{Eq.5})-(\ref{Eq.8}), we have $\mathcal{B}=P\mathcal{A}_HP^\top=\mathcal{A}_G$. By Lemma \ref{lem3}, $G$ is E-cospectral with $H$.
\end{proof}
If two $k$-uniform hypergraphs $G$ and $H$ are isomorphic, then there exists a permutation matrix $P$ such that $\mathcal{A}_G=P\mathcal{A}_HP^\top$ (see \cite{Bu,Shao-product}). From Lemma \ref{lem3}, we know that two isomorphic $k$-uniform hypergraphs are E-cospectral. By using the method in Theorem \ref{thm1}, we give a class of non-isomorphic E-cospectral hypergraphs as follows.

\vspace{3mm}
\noindent
\textbf{Example.} Let $H$ be a $3$-uniform hypergraph whose vertex set and edge set are
\begin{eqnarray*}
V(H)&=&\{u_1,u_2,u_3,u_4,v_1,\ldots,v_n\}~(n\geqslant3),\\
E(H)&=&\{v_1v_2u_2,v_1v_2u_3,v_2v_3u_2,v_2v_3u_4,v_1v_3u_3,v_1v_3u_4\}\cup F,
\end{eqnarray*}
where each edge in $F$ contains three vertices in $\{v_1,\ldots,v_n\}$, and each vertex in $\{v_4,\ldots,v_n\}$ is contained in at least one edge in $F$ if $n\geqslant4$. Let $G$ be a $3$-uniform hypergraph whose vertex set and edge set are
\begin{eqnarray*}
V(G)=V(H),E(G)&=&\{v_1v_2u_1,v_1v_2u_4,v_2v_3u_1,v_2v_3u_3,v_1v_3u_1,v_1v_3u_2\}\cup F.
\end{eqnarray*}
The vertex set $V(H)$ has a partition $V(H)=V_1\cup V_2$ such that $H$ and $G$ satisfy the conditions in Theorem \ref{thm1}, where $V_1=\{u_1,u_2,u_3,u_4\}$, $V_2=\{v_1,\ldots,v_n\}$. Then $G$ and $H$ are E-cospectral. Moreover, $G$ and $H$ are non-isomorphic E-cospectral hypergraphs, because $H$ has an isolated vertex $u_1$ and $G$ has no isolated vertices.

\vspace{3mm}
Let $K_n^k-e$ denote the $k$-uniform hypergraph obtained from $K_n^k$ by deleting one edge. We can obtain the following result from Lemma \ref{lem4}.
\begin{thm}
The complete $k$-uniform hypergraph $K_n^k$, the hypergraph $K_n^k-e$ and their complements are DS. Any $k$-uniform sub-hypergraph of $K_{k+1}^k$ is DS. The disjoint union of $K_{k+1}^k$ and some isolated vertices is DS.
\end{thm}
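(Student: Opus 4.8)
The plan is to apply Lemma~\ref{lem4} in every case: to prove that a hypergraph $H$ is DS I take an arbitrary $k$-uniform hypergraph $G$ cospectral with $H$, so that $G$ and $H$ share the same number of vertices, edges and simplices, and then argue combinatorially that these invariants force $G\cong H$. For all of the hypergraphs listed except the last, the vertex and edge counts alone will suffice; the simplex count is needed only for the disjoint union.

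Consider $K_n^k$ first. A simple $k$-uniform hypergraph on $n$ vertices has at most $\binom{n}{k}$ edges, with equality only for $K_n^k$; since $G$ has $n$ vertices and $\binom{n}{k}$ edges, $G=K_n^k$. The complement of $K_n^k$ has no edges, so a cospectral $G$ has $n$ vertices and no edges and is therefore the empty hypergraph. For $K_n^k-e$, the cospectral $G$ has $n$ vertices and $\binom{n}{k}-1$ edges, hence omits exactly one $k$-subset of $[n]$; as $S_n$ acts transitively on the $k$-subsets of $[n]$, all such hypergraphs are isomorphic, so $G\cong K_n^k-e$. The complement of $K_n^k-e$ is a single edge together with isolated vertices, and by the same transitivity all $n$-vertex one-edge hypergraphs are isomorphic, which settles this case.

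For a $k$-uniform sub-hypergraph $H$ of $K_{k+1}^k$, put $m=|V(H)|\le k+1$ and $j=|E(H)|$. I claim that, among all $k$-uniform hypergraphs, the pair $(m,j)$ determines the isomorphism class whenever $m\le k+1$. If $m<k$ then necessarily $j=0$; if $m=k$ there is a single candidate edge, so $j\in\{0,1\}$ and the class is clear; if $m=k+1$ then each edge is the complement of a unique vertex, so a choice of $j$ edges amounts to a choice of a $j$-subset of the vertex set, and $S_{k+1}$ acts transitively on $j$-subsets. A cospectral $G$ has the same $m$ and $j$ by Lemma~\ref{lem4} and has at most $k+1$ vertices, so the claim gives $G\cong H$.

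Finally, let $H$ be the disjoint union of $K_{k+1}^k$ with $t$ isolated vertices, so $H$ has $k+1+t$ vertices, $k+1$ edges and exactly one simplex: a simplex cannot contain an isolated vertex, since a $k$-subset through that vertex would fail to be an edge, so the only simplex is the vertex set of the $K_{k+1}^k$ part. Let $G$ be cospectral with $H$; by Lemma~\ref{lem4} it has the same three counts, and in particular a unique simplex $W$. All $\binom{k+1}{k}=k+1$ of the $k$-subsets of $W$ are edges of $G$, and since $G$ has only $k+1$ edges in total these account for all of them; hence no edge meets a vertex outside $W$, so $G$ is $K_{k+1}^k$ on $W$ together with $t$ isolated vertices and $G\cong H$. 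This is the main step of the proof, since it is the only place where the simplex count from Lemma~\ref{lem4} is genuinely used; the point to verify is that a single simplex already exhausts the edge budget, leaving the remaining $t$ vertices isolated.
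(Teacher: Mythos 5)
Your proof is correct and takes essentially the same approach as the paper: the paper states this theorem with no argument beyond the remark that it follows from Lemma~\ref{lem4}, and your case-by-case counting of vertices, edges and simplices supplies exactly the details that derivation requires. Your observation that the simplex count is only genuinely needed for the disjoint union of $K_{k+1}^k$ with isolated vertices (edge and vertex counts sufficing elsewhere) is an accurate reading of how Lemma~\ref{lem4} must be used.
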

If $G$ is a sub-hypergraph of a hypergraph $H$, then let $H\backslash G$ denote the hypergraph obtained from $H$ by deleting all edges of $G$.
\begin{thm}
The hypergraph $K_n^k\backslash G$ is DS, where $G$ is a $k$-uniform sub-hypergraph of $K_n^k$ such that all edges of $G$ share $k-1$ common vertices.
\end{thm}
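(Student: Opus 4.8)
The plan is to work with the complement and feed everything through Lemma~\ref{lem4}, since the theorem asserts a DS property and Lemma~\ref{lem4} is the only spectral input we are given. Write $H=K_n^k\backslash G$; by hypothesis the complement $\overline H=G$ consists of $m$ edges all containing a fixed $(k-1)$-set $S$, so its edges are $S\cup\{w\}$ with $w$ ranging over an $m$-subset $W$ of $V\setminus S$. First I would record the three invariants of $H$ guaranteed by Lemma~\ref{lem4}: $H$ has $n$ vertices, $\binom{n}{k}-m$ edges, and a number of simplices equal to $\binom{n}{k+1}-D$, where $D$ counts the $(k+1)$-sets that fail to be simplices, i.e.\ those containing at least one edge of $\overline H$. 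A direct count shows the non-simplex $(k+1)$-sets are exactly those of the form $S\cup\{w,x\}$ with $\{w,x\}\cap W\neq\emptyset$, giving $D=m(n-k)-\binom{m}{2}$.

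Now let $H'$ be any $k$-uniform hypergraph cospectral with $H$. By Lemma~\ref{lem4}, $H'$ has $n$ vertices, $\binom{n}{k}-m$ edges, and the same number of simplices, so $\overline{H'}$ is a $k$-uniform hypergraph on $n$ vertices with exactly $m$ edges $B_1,\dots,B_m$, and the number of non-simplex $(k+1)$-sets of $H'$ again equals $D$. The key step is to convert this single numerical equality into structural information about the family $\{B_1,\dots,B_m\}$. For a $(k+1)$-set $T$ let $b(T)$ denote the number of $B_i$ contained in $T$; then $\sum_T b(T)=m(n-k)$ (each $B_i$ lies in $n-k$ such $T$), while $D=\sum_T\mathbf 1[b(T)\ge 1]$. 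Subtracting gives $\sum_{T:\,b(T)\ge1}(b(T)-1)=\binom{m}{2}$. On the other hand $\sum_T\binom{b(T)}{2}$ equals the number $P$ of pairs $B_i,B_j$ with $|B_i\cap B_j|=k-1$ (each such pair lies in the unique $(k+1)$-set $B_i\cup B_j$), and clearly $P\le\binom{m}{2}$. Since $\max(b-1,0)\le\binom{b}{2}$ with equality iff $b\le 2$, these combine into $\binom{m}{2}\le P\le\binom{m}{2}$, forcing $P=\binom{m}{2}$ and $b(T)\le 2$ for every $T$. Thus every two edges of $\overline{H'}$ meet in exactly $k-1$ vertices, and no $(k+1)$-set contains three of them.

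Finally I would invoke the combinatorial classification of families of $k$-sets with constant pairwise intersection $k-1$: such a family (with $m\ge3$) is either a ``pencil'', all members sharing a common $(k-1)$-set, or else is contained in a single $(k+1)$-set. The constraint $b(T)\le2$ rules out the second alternative, since a family of $m\ge3$ sets lying inside one $(k+1)$-set $U$ would give $b(U)=m\ge3$; the cases $m\le2$ coincide with a pencil automatically. Hence $\overline{H'}$ is a pencil, so $\overline{H'}\cong\overline H=G$ and therefore $H'\cong H$, proving $H$ is DS. I expect the main obstacle to be the structural dichotomy for pairwise-$(k-1)$-intersecting families together with its interaction with the $b(T)\le2$ bound; establishing the dichotomy requires a short case analysis (fix two edges $B_1,B_2$ with core $C=B_1\cap B_2$ and union $U=B_1\cup B_2$, show each further edge either contains $C$ or lies in $U$, and check that these two possibilities cannot be mixed), after which the simplex bound cleanly eliminates the non-pencil case.
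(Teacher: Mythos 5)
Your proof is correct and follows essentially the same route as the paper: both invoke Lemma~\ref{lem4} to pin down the number of vertices, edges and simplices, and then show that among all ways of deleting $m$ edges from $K_n^k$, only the ``pencil'' configuration (all deleted edges sharing $k-1$ common vertices) attains the minimum number $m(n-k)-\binom{m}{2}$ of destroyed simplices. The only difference is completeness: the paper asserts this extremal claim without justification, whereas your double count of $\sum_T b(T)$ and $\sum_T\binom{b(T)}{2}$, combined with the pencil-versus-single-$(k+1)$-set dichotomy, actually proves both the inequality and its equality case.
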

\begin{proof}
Let $H$ be any $k$-uniform hypergraph cospectral with $K_n^k\backslash G$. Suppose that $G$ has $r$ edges. By Lemma \ref{lem4}, $H$ can be obtained from $K_n^k$ by deleting $r$ edges. Deleting $r$ edges from $K_n^k$ destroys at least $\sum_{i=0}^{r-1}(n-k-i)$ simplices, with equality if and only if all deleted edges share $k-1$ common vertices. Lemma \ref{lem4} implies that $H=K_n^k\backslash G$.
\end{proof}

\vspace{3mm}
\noindent
\textbf{Acknowledgements.}

\vspace{3mm}

This work is supported by the National Natural Science Foundation of China (No. 11371109 and No. 11271084), and the Fundamental Research Funds for the Central Universities.

\end{document}